\DeclareMathAlphabet{\pazocal}{OMS}{zplm}{m}{n}
\newcommand{\R}{\mathbb{R}}
\newcommand{\Hpazo}{\pazocal{H}}
\newcommand{\Ppazo}{\pazocal{P}}
\newcommand{\Cpazo}{\pazocal{C}}
\newcommand{\Pcal}{\mathcal{P}}
\newcommand{\Var}{\textnormal{Var}}
\newcommand{\sign}{\textnormal{sign}}
\newcommand{\Lip}{\textnormal{Lip}}
\newcommand{\xb}{\boldsymbol{x}}
\newcommand{\pb}{\boldsymbol{p}}
\newcommand{\ub}{\boldsymbol{u}}
\newcommand{\vb}{\boldsymbol{v}}
\newcommand{\Pn}{(\Ppazo_N)}
\newcommand{\Pin}{(\Ppazo_\infty)}
\newcommand{\INTDom}[3]{\int_{#2} #1 \textnormal{d} #3}
\newcommand{\INTSeg}[4]{\int_{#3}^{#4} #1 \textnormal{d} #2}
\newcommand{\weak}{\rightharpoonup}
\newcommand{\eps}{\varepsilon}
\newenvironment{pschema}[1]{\vspace{0.2cm}
\noindent\begin{Sbox}\begin{minipage}{.95\columnwidth}\vspace{0.0cm}\begin{center}{ \large #1}\vspace{0.2cm}\\
\begin{minipage}{0.9\textwidth}}{\end{minipage}\end{center}\vspace{0.1cm}\end{minipage}\end{Sbox}\fbox{\TheSbox}\vspace{0.2cm}}
\newtheorem{rmk}{Remark}
\newtheorem{Def}{Definition}
\newenvironment{proof}{{\it Proof:}}{\hfill$\square$}
\begin{document}
\begin{frontmatter}

\title{Variance Optimization and Control Regularity for Mean-Field Dynamics\thanksref{footnoteinfo}} 

\thanks[footnoteinfo]{This research was supported by the Padua University grant SID 2018 ``Controllability, stabilizability and infimun gaps for control systems'', prot.BIRD 187147 and the STARS Grants program CONNECT: Control of Nonlocal Equations for Crowds and Traffic models}

\author[First]{Benoît Bonnet} 
\author[Second]{Francesco Rossi} 

\address[First]{Inria Paris, Laboratoire Jacques-Louis Lions, Sorbonne Université, Université Paris-Diderot SPC, CNRS, Inria, 75005 Paris, France\\ (e-mail benoit.a.bonnet@inria.fr)}
\address[Second]{Dipartimento di Matematica ``Tullio Levi-Civita'', Università degli Studi di Padova, 63 via Trieste, Padova, Italy\\ (e-mail: francesco.rossi@math.unipd.it)}

\begin{abstract}. We study a family of optimal control problems in which one aims at minimizing a cost that mixes a quadratic control penalization and the variance of the system, both for finitely many agents and for the mean-field dynamics as their number goes to infinity. While solutions of the discrete problem always exist in a unique and explicit form, the behavior of their macroscopic counterparts is very sensitive to the magnitude of the time horizon and penalization parameter.

\noindent When one minimizes the final variance, there always exists a Lipschitz-in-space optimal controls for the infinite dimensional problem, which can be obtained as a suitable extension of the optimal controls for the finite-dimensional problems. The same holds true for variance maximizations whenever the time horizon is sufficiently small. On the contrary, for large final times (or equivalently for small penalizations of the control cost), it can be proven that there does not exist Lipschitz-regular optimal controls for the macroscopic problem.

\end{abstract}

\begin{keyword} Mean-Field Optimal Control, Conservation Laws, Distributed Parameter Systems. 
\end{keyword}

\end{frontmatter}

\section{Introduction}

The mathematical analysis of collective behavior in large-scale systems of interacting agents has received an increasing attention during the past decades. Multi-agent systems are ubiquitous in applications, ranging from networked control to animal flocks and swarms, see e.g. \cite{Bullo2009,CS1}.  In this context, a multi-agent system is usually described by a family of ordinary differential equations (ODEs for short), of the form
\begin{equation}
\label{eq:IntroODE}
\dot x_i(t) = \vb_N(t,\xb(t),x_i(t)), 
\end{equation}
where $\xb = (x_1,\dots,x_N)$ denotes the state of all the agents and $\vb_N : [0,T] \times (\R^d)^N \times \R^d \rightarrow \R^d$ is a \textit{non-local velocity field} depending both on the running agent and on the whole state of the system. However general and useful, the intrinsic dependence of such models on the number $N \geq 1$ of agents makes most of the classical computational approaches practically intractable for realistic scenarios. 

One of the most natural ideas to circumvent this limitation is to approximate the large system in \eqref{eq:IntroODE} by a single infinite-dimensional dynamics. This process, called the \textit{mean-field limit}, describes the evolution of the system when the number $N \geq 1$ of agents tends to infinity in a specific way (see e.g. the survey \cite{golse}). In this setting, agents are supposed to be identical or indistinguishable, and the assembly of particles is described by means of its \textit{spatial density} $\mu(\cdot)$, which is represented by a measure. The evolution in time of this global quantity is then prescribed by a \textit{non-local continuity equation}, that is a partial differential equation (PDE for short) of the form
\begin{equation}
\label{eq:IntroPDE}
\partial_t \mu(t) + \nabla \cdot ( v(t,\mu(t),\cdot)\mu(t)) = 0.
\end{equation}
This approach has been successfully used e.g. to model pedestrian dynamics and biological systems (\cite{Camazine2001,CPT}), as well as to transpose the study of classical patterns such as consensus or flocking to macroscopic approximations of discrete multi-agent systems (\cite{Carrillo2010,HaLiu}).

In addition to the modeling and analysis of this class of dynamics, the problem of controlling multi-agent systems is relevant in a growing number of applications, see e.g. \cite{Caponigro2015,leonard}. Motivated by implementability and efficiency considerations, many contributions have aimed at generalising relevant notions of control theory to PDEs of the form \eqref{eq:IntroPDE}, serving as mean-field approximations of the discrete systems \eqref{eq:IntroODE} (see e.g. \cite{bermansurvey}). A few articles have been dealing with controllability results (see \cite{Duprez2019,Duprez2020}) or explicit syntheses of control laws (e.g. \cite{Caponigro2015,ControlKCS}). On the other hand, the major part of the literature has been focusing on \textit{mean-field optimal control problems}, with contributions ranging from existence results (\cite{ContInc,FLOS,MFSOC,MFOC}) to first-order optimality conditions (\cite{MFPMP,PMPWassConst,SetValuedPMP,PMPWass} and references therein). 

In this article, we consider optimal control problems formulated both in the ODE and PDE settings, and discuss the possibility of applying the mean-field approach (i.e. to let $N \to +\infty$) on their solutions. We shall restrict our attention to a very simple family of problems, which exhibits the most important issues arising in this setting. Our goal in this context is to prove the following idea: if optimal controls at the discrete level can be written as Lipschitz functions of the individual agent states, with a Lipschitz constant that is uniform with respect to $N \geq 1$, then such controls pass to the limit, and the resulting mean-field optimal controls are Lipschitz as well. Instead, when the Lipschitz constants of the discrete optimal controls explode as $N\to+\infty$, then there does not exist a Lipschitz-regular minimizer for the mean-field problem.

To this end, we study one of the simplest optimal control problem possible for \eqref{eq:IntroODE} and \eqref{eq:IntroPDE}. We posit that the agents evolve on the real line $\R$, and that the controls act linearly on each of them. We further assume that there is no interaction between the agents at the dynamical level (i.e. $v\equiv 0$), and that a final cost promotes either the minimization or the maximization of the variance (both in the finite and infinite-dimensional setting) at time $T > 0$. Moreover, a running cost encodes an $L^2$-penalization of the controls. The relative weight between these two terms is represented by a scalar quantity $\lambda\neq0$, which value compared with respect to $T$ plays a fundamental role in the regularity of optimal controls, as amply discussed below.

In the sequel, we will therefore consider the following discrete multi-agent optimal control problem. 

\vspace{-0.2cm}

\begin{pschema}{$\Pn$}
Minimize the cost functional \smallskip
\begin{equation*}
\Cpazo_N(\xb^0,\ub):=\frac{1}{2N} \sum_{i=1}^N \INTSeg{\hspace{-0.15cm} u_i^2(t)}{t}{0}{T} - \frac{1}{2\lambda}\Var(\xb(T)), 
\end{equation*}
where \vspace{0.15cm}
\begin{itemize}
\item[$\diamond$] the controls $\ub:[0,T]\to [-1,1]^N$ are Lebesgue measurable, \smallskip
\item[$\diamond$] the curve $\xb(\cdot):=(x_1(\cdot),\dots,x_N(\cdot))$ is the unique solution of the controlled dynamics
\begin{equation}
\label{e-ODE}
\dot x_i(t) = u_i(t), \quad x_i(0)  =  x_i^0,
\end{equation}
with $x_i^0 := \tfrac{2i-N-1}{N-1}$ for each $i \in \{1,\dots,N\}$.
\end{itemize}
\vspace{-0.15cm}
\end{pschema}
The regularity required in $\Pn$ for the controls is the standard one ensuring existence and uniqueness of the solution to \eqref{e-ODE}, see e.g. \cite[Chapter 23]{Clarke}. 

For the infinite-dimensional problem, defining admissible controls is more delicate, as one needs to ensure the well-posedness of the solution to \eqref{eq:IntroPDE}. For this reason, we impose a Lipschitz regularity of $(t,x) \mapsto u(t,x)$ with respect to the space variable $x \in \R$ (for a thorough discussion of this issue, see e.g. \cite{lipreg}), and study the following mean-field optimal control problem.

\vspace{-0.2cm}

\begin{pschema}{$\Pin$}
Minimize the cost functional \smallskip
\begin{equation*}
\hspace{-0.35cm} \Cpazo_\infty(\mu^0,u):= \frac{1}{2} \INTSeg{\hspace{-0.15cm} \INTDom{u^2(t,x)}{\R}{\mu(t)(x)}}{t}{0}{T} - \frac{1}{2\lambda}\Var(\mu(T)), 
\end{equation*}
\end{pschema}

\begin{pschema} \hfill \vspace{-0.15cm}
where \vspace{0.15cm}
\begin{itemize}
\item[$\diamond$] the controls $u : [0,T] \times \R \mapsto [-1,1]$ are measurable in time and Lipschitz in space,
 \item[$\diamond$] the curve $\mu(\cdot)$ is the unique solution of the controlled dynamics
\begin{equation}
\label{e-PDE}
\left\{
\begin{aligned}
& \partial_t \mu(t) + \nabla \cdot (u(t,\cdot) \mu(t)) = 0, \\
&\mu(0) = \mu^0,
\end{aligned}
\right.
\end{equation}
with $\mu^0 := \tfrac12 \chi_{[-1,1]}.$
\end{itemize}
\end{pschema}

In what follows, we will describe precisely in which sense the problem $\Pin$ is the limit of $\Pn$ as $N\to+\infty$. For the moment, observe that for each vector $\xb \in \R^N$ of $N \geq 1$ agent positions, one can define the {\it empirical measure}
\begin{equation*}
\mu_N:= \tfrac{1}{N} \mathsmaller{\sum}\limits_{i=1}^N \delta_{x_i} \quad \text{where} \quad \xb = (x_1,\dots,x_N).
\end{equation*}
Via this association, one can easily show that the discrete and continuous variances coincide, namely
\begin{equation*}
\begin{aligned}
\Var(\xb) :&= \tfrac{1}{N} \mathsmaller{\sum}\limits_{i=1}^N x_i^2 - \Big( \tfrac{1}{N} \mathsmaller{\sum}\limits_{i=1}^N x_i \Big)^2\\
& = \mathsmaller{\INTDom{x^2}{\R}{\mu_N(x)}} - \Big( \mathsmaller{\INTDom{x}{\R}{\mu_N(x)}} \Big)^2 = \Var(\mu_N),
\end{aligned}
\end{equation*}
and the initial data $(\mu_N^0)$ associated with $\xb^0$ as in $\Pn$ converge in the sense of measures \eqref{e-weak} towards $\tfrac{1}{2} \chi_{[-1,1]}$.

The terms involving the controls are more tricky, as some extra regularity is needed to ensure some sort of convergence between the discrete and continuous models. This is the crucial point of this article: we will show that the {\bf optimization process induces sufficient regularity of the optimal control for $\lambda \in (-\infty,0) \cup (T,+\infty)$}, while Lipschitz solutions for $\Pin$ fail to exist when $\lambda\in (0,T]$.

\begin{thm}[Main result]
\label{t-main} 
Let $\lambda>T$ or $\lambda<0$. Then, there exists a minimizer $u^* : [0,T] \times \R \rightarrow \R$ of $\Pin$ which is uniformly Lipschitz with respect to the space variable. Moreover, this minimizer is the limit of optimal controls $(u_i^*(\cdot))$ for $\Pn$, in the sense that $|x_i^*(t)- x| \to 0$ implies $|u_i^*(t) - u^*(t,x)| \to 0$. Instead for $\lambda\in(0,T]$, there does not exist a Lipschitz-in-space minimizer for $\Pin$.
\end{thm}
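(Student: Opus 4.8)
The plan is to exploit that the dynamics \eqref{e-ODE}--\eqref{e-PDE} are driftless, which collapses both problems to static optimizations over the agents' net displacements. Since the cost sees the controls only through the terminal variance and the quadratic energy, for a prescribed displacement $d_i = x_i(T)-x_i^0 = \int_0^T u_i$ the energy $\int_0^T u_i^2$ is minimized by the constant-in-time control (Cauchy--Schwarz), so along optimal trajectories every agent travels in a straight line with $|d_i|\le T$. For $\Pn$ this turns $\Cpazo_N$ into the reduced quadratic $\tfrac{1}{2NT}\sum_i d_i^2 - \tfrac{1}{2\lambda}\Var(\xb^0+\boldsymbol{d})$, and for $\Pin$, writing $\Phi_t$ for the flow of $u$, one obtains $\Cpazo_\infty(u)\ge J(\Phi_T)$ with $J(y):=\tfrac{1}{4T}\int_{-1}^1(y(x)-x)^2\,\mathrm{d}x - \tfrac{1}{2\lambda}\Var(y)$, minimized over admissible endpoint maps $y=\Phi_T$ satisfying $|y(x)-x|\le T$, with equality exactly for straight-line (constant-in-time) controls.

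For $\lambda>T$ or $\lambda<0$ I would first observe that the reduced discrete cost has Hessian $\tfrac{1}{NT}\Id-\tfrac{1}{N\lambda}P$, with $P$ the mean-centering projector, which is positive definite precisely when $\tfrac1T>\tfrac1\lambda$, i.e.\ in this parameter range. Strict convexity yields the unique minimizer $d_i=\tfrac{T}{\lambda-T}x_i^0$, that is the constant control $u_i^*(t)\equiv\tfrac{x_i^0}{\lambda-T}$, which in feedback form reads $u_i^*(t)=\tfrac{x_i^*(t)}{\lambda-T+t}$ (truncated to $[-1,1]$ when $|\lambda-T|<1$, which keeps it Lipschitz). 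The continuum problem is identical: the second variation of $J$ is $\tfrac1T\|h\|_{L^2(\mu^0)}^2-\tfrac1\lambda\Var(h)$, positive definite iff $\lambda>T$ or $\lambda<0$ (using $\Var(h)\le\|h\|_{L^2(\mu^0)}^2$), so $J$ has the unique minimizer $y^*(x)=\tfrac{\lambda}{\lambda-T}x$ realized by the Lipschitz-in-space feedback $u^*(t,x)=\tfrac{x}{\lambda-T+t}$, which is well defined on $[0,T]$ exactly because $\lambda-T+t$ never vanishes there when $\lambda\notin(0,T]$. The claimed convergence is then immediate from this shared feedback form: $x_i^*(t)\to x$ forces $u_i^*(t)=\tfrac{x_i^*(t)}{\lambda-T+t}\to u^*(t,x)$.

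For $\lambda\in(0,T]$ I would argue non-existence by contradiction through the same reduction. Now $J$ is not convex: its interior critical point is a maximum along symmetric spreading perturbations, so $\min J$ sits on the boundary $\{|y-x|=T\}$, where minimizing $J$ reduces to maximizing $\Var(y)$ over $y=x+T\sigma(x)$ with $\sigma(x)\in\{\pm1\}$; the essentially unique maximizer is the bang--bang map $y^*(x)=x+T\sign(x)$, discontinuous at $x=0$. The argument then rests on two facts. First, any Lipschitz-in-space control generates an order-preserving homeomorphism $\Phi_T$ with $|\Phi_T(x)-x|\le T$, so $\Cpazo_\infty(u)\ge J(\Phi_T)\ge J(y^*)$; since $\Phi_T$ is continuous it cannot coincide with the discontinuous minimizer $y^*$, hence no Lipschitz control can attain the value $J(y^*)$. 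Second, steep but Lipschitz straight-line approximations $y_\eps$ of $y^*$ give controls $u_\eps$ with $\Cpazo_\infty(u_\eps)=J(y_\eps)\to J(y^*)$ as $\eps\to0$ (their Lipschitz constants blow up, which is allowed). Together these yield $\inf_{\text{Lip}}\Cpazo_\infty=J(y^*)$, an infimum approached but never realized in the Lipschitz class, so $\Pin$ admits no Lipschitz-in-space minimizer.

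The main obstacle is the non-existence half. One must make fully rigorous that every Lipschitz-in-space control induces an increasing homeomorphic flow whose endpoint map remains admissible, so that the lower bound $\Cpazo_\infty(u)\ge J(\Phi_T)$ genuinely holds; that the minimizer of $J$ is discontinuous and essentially unique, ruling out any continuous optimal endpoint map; and that the recovery sequence $u_\eps$ can be constructed explicitly with controlled energy. Steps demanding the careful passage between the Eulerian feedback description $u(t,\cdot)$ and the Lagrangian displacement description $\Phi_T$ are where the analysis is least routine, and where the $L^2(\mu^0)$ estimates tying energy to displacement must be handled with care.
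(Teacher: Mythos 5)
Your proposal is correct in outline but follows a genuinely different route from the paper. The paper never reduces to a static problem: it solves $\Pn$ explicitly via the Pontryagin Maximum Principle, extends the discrete optimal controls to the Eulerian feedbacks $u^*(t,y)=\pi\big(\tfrac{y}{\lambda-T+t}\big)$ (resp.\ the sign field), and then passes to the limit through Lemma~\ref{l-cost}, a $\Gamma$-convergence-type statement ($\Cpazo_N(\xb_N^0,\ub_N)\to\Cpazo_\infty(\mu^0,u)$ for empirical data, proved via the Wasserstein--Gr\"onwall estimate of Proposition~\ref{p-gronwall}); both the optimality for $\lambda>T$, $\lambda<0$ and the non-existence for $\lambda\in(0,T]$ are contradiction arguments played between $\Pn$ and $\Pin$. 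You instead collapse everything to a Lagrangian displacement functional $J(\Phi_T)$ via Cauchy--Schwarz along characteristics, settle the good regime by strict convexity of $J$ (your positivity criterion $\tfrac1T>\tfrac1\lambda$ exactly reproduces the paper's dichotomy), and settle the bad regime by a concavity/extreme-point argument identifying the essentially unique minimizer $y^*(x)=x+T\,\mathrm{sign}(x)$, which no continuous flow map of a Lipschitz control can equal, combined with a Lipschitz recovery sequence $u_\eps$ showing the infimum equals $J(y^*)$. What each approach buys: the paper's route directly delivers the ``mean-field limit of discrete optimal controls'' clause of the theorem and generalizes to problems without explicit static reductions, but its non-existence half is only sketched (the crucial second bullet, that costs of $L$-Lipschitz controls stay bounded away from the bang-bang value, is asserted as ``a direct computation''); your continuity-obstruction argument makes precisely that step rigorous and self-contained, and you still recover the convergence clause because the discrete and continuum optimizers share the same feedback law.

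Two steps deserve more care than you give them. First, when $|\lambda-T|<1$ the box constraint $\ub\in[-1,1]^N$ is active, and for a quadratic coupled through the mean-centering projector $P$ the constrained minimizer is \emph{not} in general the componentwise truncation of the unconstrained one; here it is, but only because the symmetry of the data forces $\bar{\ub}^*_N=0$, which decouples the KKT conditions into the paper's pointwise fixed-point equation \eqref{e-ui} --- you should say this explicitly, and likewise verify in the continuum that the truncated map minimizes $J$ over $\{\,y:\ |y(x)-x|\le T\,\}$. Second, at $\lambda=T$ the second variation $\tfrac1T\|h\|^2_{L^2(\mu^0)}-\tfrac1\lambda\Var(h)$ degenerates (it vanishes on mean-zero directions rather than being negative), so your ``interior critical point is a maximum'' phrasing fails there; the conclusion survives because $J$ becomes affine in the mean-zero directions, reducing the boundary minimization to maximizing $\int x\,d(x)\,\textnormal{d}\mu^0(x)$ over $|d|\le T$, whose a.e.-unique solution is still $d=T\,\mathrm{sign}(x)$. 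With these two points patched, your argument is complete.
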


The dichotomy exposed above is paradigmatic of many mean-field optimal control problems, and is investigated in greater generality in \cite{lipreg}.

The structure of the article is the following. We first introduce the continuity equation and regularity issues for $\Pin$ in Section \ref{s-PDE}. We then explicitly solve $\Pn$ in Section \ref{s-Pn}, and proceed to rigorously study the limit $\Pn \to \Pin$ in Section \ref{s-proof}, proving Theorem \ref{t-main}. We finally draw some conclusions in Section \ref{s-conclusion}.

\section{Transport equations and mean-field optimal control}
\label{s-PDE}

In this section, we fix some notations and recall several results about Wasserstein distances, continuity equations and mean-field optimal control problems. 

We denote by $\Pcal_c(\R^d)$ the space of probability measures on $\R^d$ with compact support, endowed with the standard {\it weak topology of measures}, defined as 
\begin{equation}
\label{e-weak}
\mbox{$\mu_n \underset{n \rightarrow +\infty}{\weak} \mu \quad \text{if} \quad \INTDom{f(x)}{\R^d}{\mu_n(x)} \underset{n \rightarrow +\infty}{\longrightarrow} \INTDom{f(x)}{\R^d}{\mu(x)}, $}
\end{equation}
for every $f \in C^{\infty}_c(\R^d)$. We also denote by $\Lip(f)$ the Lipschitz constant of a Lipschitz continuous function, i.e.
\begin{equation*}
\Lip(f):=\sup_{x,y\in \mathrm{dom}(f), x\neq y}\frac{\|f(x)-f(y)\|}{\|x-y\|}.
\end{equation*}
We recall the definition of solution to continuity equations.
\begin{Def}
We say that $\mu(\cdot) \in C^0([0,T],\Pcal_c(\R^d))$ solves a \textit{continuity equation} with initial condition $\mu^0 \in \Pcal_c(\R^d)$ driven by a vector field $w: [0,T] \times \R^d\to \R^d$, i.e. 
\begin{equation}
\label{eq:TransportPDE}
\left\{
\begin{aligned}
& \partial_t \mu(t) + \nabla \cdot (w(t,\cdot)\mu(t)) = 0, \\
& \mu(0) = \mu^0,
\end{aligned}
\right.
\end{equation} 
if the following distributional identity holds
\begin{equation}
\label{eq:TransportPDE_Dist1}
\mbox{$\INTSeg{\INTDom{ \Big( \partial_t \xi(t,x) + \langle \nabla_x \xi(t,x) , w(t,x) \rangle \Big)}{\R^d}{\mu(t)(x)}}{t}{0}{T} = 0,$}
\end{equation}
for any $\xi \in C^{\infty}_c((0,T) \times \R^d)$. 
\end{Def}

The connection between continuity equations in infinite dimension and ODEs in finite dimension is colloquially known as the {\it method of characteristics}, and is supported by the two following statements.

\begin{Def} Let $f:\R^d\rightarrow\R^d$ be a Borel map. The {\it push-forward} $f_{\#}\mu$ of $\mu \in \Pcal(\R^d)$ is the measure satisfying 
\begin{equation*}
(f_{\#}\mu)(E):=\mu(f^{-1}(E)),
\end{equation*}
for every $E \subset \R^d$ such that $f^{-1}(E)$ is $\mu$-measurable.
\end{Def}

\begin{thm}[Method of characteristics]
Let $\mu^0\in \Pcal_c(\R^d)$ and  $w : [0,T] \times \R^d \rightarrow \R^d$ be a Carath\'eodory vector field that is locally Lipschitz and sublinear. Then, the continuity equation \eqref{eq:TransportPDE} admits a unique solution $\mu(\cdot)$, given by 
\begin{equation*}
\hspace{2.5cm} \mu(t)=(\Phi_t^{w})_{\#}\mu^0 \hspace{1.1cm} \text{for each $t \in [0,T]$}, 
\end{equation*} 
where $x \in \R^d \mapsto \Phi_t^{w}(x) \in \R^d$ is the \textit{flow map} of $w$.
\end{thm}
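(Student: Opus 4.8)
The plan is to obtain existence by building the solution explicitly from the characteristic flow, and uniqueness by a duality argument against the backward transport equation. First I would verify that the characteristic system $\dot x(t)=w(t,x(t))$, $x(0)=x_0$, has a unique absolutely continuous solution on the whole of $[0,T]$ for every $x_0\in\R^d$. Local existence and uniqueness follow from the Carath\'eodory and local Lipschitz hypotheses, while global existence is enforced by the sublinear bound $|w(t,x)|\le C(1+|x|)$: Gronwall's inequality yields $1+|x(t)|\le(1+|x_0|)e^{Ct}$, ruling out finite-time blow-up. This defines the flow $\Phi_t^w$. Two further Gronwall estimates are then recorded: the map $x_0\mapsto\Phi_t^w(x_0)$ is locally Lipschitz and invertible, its inverse being the backward flow; and trajectories starting in a compact set remain in a fixed compact set, uniformly for $t\in[0,T]$. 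The latter shows $\mu(t):=(\Phi_t^w)_\#\mu^0\in\Pcal_c(\R^d)$, and continuity of $(t,x_0)\mapsto\Phi_t^w(x_0)$ together with dominated convergence gives $\mu(\cdot)\in C^0([0,T],\Pcal_c(\R^d))$.

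To check that this pushforward is a solution, I would take $\xi\in C_c^\infty((0,T)\times\R^d)$ and use the change-of-variables rule $\INTDom{\xi(t,x)}{\R^d}{\mu(t)(x)}=\INTDom{\xi(t,\Phi_t^w(x_0))}{\R^d}{\mu^0(x_0)}$. Differentiating the integrand in $t$ and invoking $\tderv{}{t}\Phi_t^w(x_0)=w(t,\Phi_t^w(x_0))$ produces $\partial_t\xi+\langle\nabla_x\xi,w\rangle$ evaluated along the characteristic. Integrating over $[0,T]$, the total time derivative leaves a boundary term that vanishes because $\xi$ is compactly supported in $(0,T)$, and transporting back to $\mu(t)$ yields exactly the distributional identity \eqref{eq:TransportPDE_Dist1}.

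Uniqueness is the main obstacle. Let $\mu_1(\cdot),\mu_2(\cdot)$ be two solutions with the same datum and set $\nu(\cdot):=\mu_1(\cdot)-\mu_2(\cdot)$, a continuous curve of compactly supported signed measures with $\nu(0)=0$. Fix $\tau\in(0,T)$ and $\phi\in C_c^\infty(\R^d)$, and let $\psi(t,x):=\phi\big(\Phi_\tau^w((\Phi_t^w)^{-1}(x))\big)$, the value at time $\tau$ of $\phi$ read back along characteristics. By construction $\psi$ is constant along trajectories, hence solves $\partial_t\psi+\langle\nabla_x\psi,w\rangle=0$ with $\psi(\tau,\cdot)=\phi$, and is compactly supported in space. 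Testing the equation for $\nu$ against $\xi(t,x):=\theta(t)\psi(t,x)$ with $\theta\in C_c^\infty((0,T))$, the transport part cancels and only $\theta'(t)\psi$ survives, giving $\INTSeg{\theta'(t)g(t)}{t}{0}{T}=0$ with $g(t):=\INTDom{\psi(t,x)}{\R^d}{\nu(t)(x)}$. Thus $g$ is constant on $(0,T)$; being continuous on $[0,T]$ with $g(0)=0$, it vanishes identically, so $g(\tau)=\INTDom{\phi}{\R^d}{\nu(\tau)(x)}=0$. Since $\phi$ and $\tau$ are arbitrary, $\nu\equiv0$.

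The delicate point is that, $w$ being only locally Lipschitz, the dual solution $\psi$ is merely Lipschitz in space rather than $C^\infty$, so it is not literally an admissible test function in \eqref{eq:TransportPDE_Dist1}. Making the argument rigorous requires either extending the weak formulation to spatially Lipschitz test functions, or approximating $w$ by smooth sublinear fields $w_n$, for which the flows, the dual solutions $\psi_n$, and their spatial gradients converge locally uniformly on the relevant compact sets. The local Lipschitz regularity of the flow, together with the uniform compactness of supports, is precisely what furnishes the bounds needed to pass to the limit, and this approximation-and-limit step is where the real work lies.
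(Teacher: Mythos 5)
The paper itself offers no proof of this theorem: it is stated as a recalled classical fact (in the spirit of \cite[Chapter 8]{AGS}, cited in the surrounding remark), so there is no internal argument to compare against and your proposal must be judged on its own merits. Your route is the standard one: global well-posedness of the characteristic ODE via Carath\'eodory theory plus a sublinearity--Gronwall bound excluding blow-up, existence by verifying that $t \mapsto (\Phi_t^w)_{\#}\mu^0$ satisfies the distributional identity \eqref{eq:TransportPDE_Dist1} through the chain rule along characteristics (valid for a.e.\ $t$ since the flow is only absolutely continuous in time, which suffices), and uniqueness by duality against the backward transport equation, i.e.\ testing $\nu = \mu_1 - \mu_2$ with $\theta(t)\psi(t,x)$ where $\psi$ is constant along characteristics. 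All of this is correct, including the preservation of compact supports and the weak continuity of the pushforward curve.

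The one gap you flag --- that $\psi$ is merely Lipschitz in space, hence not an admissible test function in \eqref{eq:TransportPDE_Dist1} --- is genuine, but the repair is lighter than you suggest, and you should note why your instinct to avoid mollifying $\psi$ naively is sound. If one regularizes the test function directly, $\xi_\eps := \theta \, \psi_\eps$ with $\psi_\eps := \psi \ast \rho_\eps$ in space, then $\partial_t \psi_\eps + \langle w, \nabla \psi_\eps\rangle = \langle w , \nabla\psi_\eps \rangle - (\langle w, \nabla \psi \rangle)_\eps$ is a commutator bounded \emph{uniformly} on compact sets by $C \, \eps \, \Lip(w) \Lip(\psi)$, since $|\langle w(x) - w(x-y), \nabla\psi(x-y)\rangle| \leq \Lip(w)\,|y|\,\Lip(\psi)$ for $|y| \leq \eps$; uniform smallness is exactly what is needed to integrate against the possibly singular measures $\nu(t)$, whereas mere Lebesgue-a.e.\ convergence of $\nabla \psi_\eps$ would not suffice. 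Alternatively, in your scheme of approximating $w$ by smooth sublinear fields $w_n$, you do \emph{not} need $\nabla\psi_n \to \nabla\psi$: testing with $\theta\psi_n$ (where $\psi_n$ is the exact dual solution for $w_n$) leaves only the error $\int_0^T \hspace{-0.1cm} \int \theta \, \langle w - w_n , \nabla \psi_n \rangle \, \textnormal{d}\nu(t) \, \textnormal{d}t$, which vanishes because $\Lip(\psi_n(t,\cdot)) \leq \Lip(\phi) e^{LT}$ uniformly in $n$ on the relevant compact set (mollification essentially preserves local Lipschitz constants) while $w_n \to w$ locally uniformly. Finally, $\psi_n$ is still only absolutely continuous in time, so either mollify in $t$ as well --- harmless since $\theta$ has compact support in $(0,T)$ --- or observe that \eqref{eq:TransportPDE_Dist1} extends by density to test functions Lipschitz in time. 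With these standard adjustments your proof is complete.
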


\begin{rmk}
It is known that weak solutions to continuity equations can exist in low regularity contexts, see \cite[Chapter 8]{AGS}. However, the corresponding notions do not ensure the well-posedness of \eqref{eq:IntroPDE} for arbitrary measures, and are less suited to mean-field control.
\end{rmk}

\subsection{Wasserstein distance}

We now recall the definition of the Wasserstein distances (see e.g. \cite[Chapter 7]{AGS}), together with some of their connections to solutions of continuity equations. We will only work with the 1-Wasserstein distance, as this is sufficient for our subsequent developments.

\begin{Def}
Given $\mu,\nu\in \Pcal_c(\R^d)$, the $1$-Wasserstein distance between $\mu$ and $\nu$ is defined by
\begin{equation}
\label{e-W}
W_1(\mu,\nu):=\sup\limits_{\Lip(f)\leq 1} \mathsmaller{\INTDom{f(x)}{\R^d}{(\mu-\nu)(x)}}.
\end{equation}
\end{Def}
A first fundamental property of the $1$-Wasserstein distance is that it metrizes the weak convergence of measures \eqref{e-weak}, in the following sense.
\begin{prop}
It holds $\lim\limits_{n\to+\infty}W_1(\mu_n,\mu)=0$ if and only if
\begin{equation*}
\mbox{$\mu_n \underset{n \rightarrow +\infty}{\weak} \mu \quad \text{and} \quad \INTDom{|x|}{\R^d}{\mu_n(x)} \underset{n \rightarrow +\infty}{\longrightarrow} \INTDom{|x|}{\R^d}{\mu(x)}.$}
\end{equation*}
\end{prop}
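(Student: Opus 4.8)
The plan is to prove the two implications separately, the direct one being an immediate consequence of the Kantorovich--Rubinstein expression \eqref{e-W} and the converse one carrying the analytical weight through a tightness and uniform integrability argument. For the direct implication, assume $W_1(\mu_n,\mu) \to 0$. Since both $f$ and $-f$ are admissible in the supremum \eqref{e-W} whenever $\Lip(f) \leq 1$, one has $\big| \INTDom{f(x)}{\R^d}{(\mu_n-\mu)(x)} \big| \leq W_1(\mu_n,\mu)$ for every such $f$. Choosing $f := g/\Lip(g)$ for an arbitrary non-constant $g \in C_c^\infty(\R^d)$ and letting $n \to +\infty$ yields the weak convergence \eqref{e-weak}, whereas the choice of the $1$-Lipschitz map $f := |\cdot|$ yields at once the convergence of the first moments.

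For the converse implication, I would start by observing that the convergence $\INTDom{|x|}{\R^d}{\mu_n(x)} \to \INTDom{|x|}{\R^d}{\mu(x)}$ entails the uniform bound $\sup_n \INTDom{|x|}{\R^d}{\mu_n(x)} < +\infty$, so that Markov's inequality provides the tightness of the sequence $(\mu_n)$. Together with \eqref{e-weak}, this tightness upgrades the weak convergence so that it holds against every bounded continuous function, and not merely against $C_c^\infty(\R^d)$. I would then combine this narrow convergence, tested against the bounded continuous maps $x \mapsto \min(|x|,R)$, with the convergence of the first moments to deduce the uniform integrability of $\{|x|\}$ with respect to $(\mu_n)$, namely that $\sup_n \INTDom{|x|}{\{|x| > R\}}{\mu_n(x)} \to 0$ as $R \to +\infty$.

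With these ingredients at hand, I would fix $\eps > 0$ and choose $R$ so large that the tail integrals above, as well as the corresponding tail for $\mu$, are all smaller than $\eps$. Every competitor $f$ in \eqref{e-W} may be normalised so that $f(0) = 0$, which is harmless because $\mu_n$ and $\mu$ are probability measures, and then split as $f = f \circ \pi_R + (f - f \circ \pi_R)$, where $\pi_R$ denotes the nearest-point projection onto the closed ball of radius $R$. The remainder is supported in $\{|x| > R\}$ and dominated there by $|x|$, since $f$ is $1$-Lipschitz, so that its integral against $\mu_n - \mu$ is controlled by the uniformly small tails; the truncated part $f \circ \pi_R$ is a bounded $1$-Lipschitz function. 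The main obstacle, and the heart of the matter, is that the convergence $\INTDom{f \circ \pi_R}{\R^d}{(\mu_n-\mu)(x)} \to 0$ must be made uniform over the whole family of $1$-Lipschitz competitors; since these truncations are equibounded and equi-Lipschitz, I would settle this point through an Arzel\`a--Ascoli compactness argument --- equivalently, by extracting from any hypothetical violating sequence $(f_n)$ a locally uniformly convergent subsequence and passing to the limit via dominated convergence --- thereby concluding that $\sup_{\Lip(f) \leq 1} \INTDom{f(x)}{\R^d}{(\mu_n-\mu)(x)} = W_1(\mu_n,\mu) \to 0$.
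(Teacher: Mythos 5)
Your proof is correct, but note that the paper itself does not prove this proposition: it is recalled as a classical fact (the references around it point to \cite[Chapter~7]{AGS}, where it appears as the standard metrization property of $W_1$), so there is no in-paper argument to compare against. Your argument is essentially the textbook proof of that recalled result. Both implications are sound: the direct one correctly exploits the symmetry $f \mapsto -f$ in \eqref{e-W}, the normalization $f := g/\Lip(g)$ for $g \in C^{\infty}_c(\R^d)$, and the admissibility of the $1$-Lipschitz map $x \mapsto |x|$; the converse correctly assembles tightness via Markov's inequality, the upgrade from \eqref{e-weak} to narrow convergence, uniform integrability of the first moments via the truncations $x \mapsto \min(|x|,R)$, and the decomposition $f = f \circ \pi_R + (f - f\circ\pi_R)$ with $f(0) = 0$, where the remainder is killed by the uniformly small tails and the projection $\pi_R$ is $1$-Lipschitz. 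Your Arzel\`a--Ascoli argument for the uniformity over the family of competitors is the right device and closes the genuine gap in naive attempts; one small simplification you could note is that $f \circ \pi_R$ depends only on the restriction of $f$ to the closed ball $\bar{B}_R$, so uniform convergence of an extracted subsequence on that compact set is already \emph{global} uniform convergence of the truncations, and since the total variation of $\mu_n - \mu$ is at most $2$ you can conclude by a plain sup-norm estimate, without invoking dominated convergence. In the specific setting of the paper one could even shortcut further, since $\mu \in \Pcal_c(\R^d)$ makes its tail integrals vanish outright for $R$ large, but your proof has the merit of establishing the proposition in the generality in which it is stated.
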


We now recall a useful stability result with respect to the Wasserstein distance for solutions of \eqref{eq:TransportPDE}.

\begin{prop}\label{p-gronwall}
Let $\mu,~\nu\in \Pcal_c(\R^d)$ and $w: [0,T] \times \R^d \rightarrow\R^d$ be a uniformly bounded, measurable in time and Lipschitz in space vector field, with Lipschitz constant equal to $L \geq 0$. Then for each $t \in [0,T]$, it holds
\begin{equation}
\label{e-Gronwall}
W_1((\Phi_t^w)_{\#}\mu,(\Phi_t^w)_{\#}\nu) \leq e^{L t} W_1(\mu,\nu).
\end{equation}
\end{prop}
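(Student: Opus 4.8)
The plan is to reduce the estimate to a Lipschitz bound on the flow map $\Phi_t^w$, and then to transfer that bound to the measures through the Kantorovich--Rubinstein dual representation \eqref{e-W}. The key observation is that, by the very definition of the push-forward, for any $1$-Lipschitz function $f$ one has $\INTDom{f(x)}{\R^d}{((\Phi_t^w)_{\#}\mu)(x)} = \INTDom{f(\Phi_t^w(x))}{\R^d}{\mu(x)}$, so testing the pushed-forward measures against $f$ is the same as testing $\mu-\nu$ against the composition $f \circ \Phi_t^w$. Hence everything boils down to controlling $\Lip(f \circ \Phi_t^w)$, which in turn is governed by $\Lip(\Phi_t^w)$.

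First I would establish the flow estimate $\Lip(\Phi_t^w) \leq e^{Lt}$. Since $w$ is uniformly bounded (hence sublinear), Lipschitz in space (hence locally Lipschitz) and measurable in time, the Method of Characteristics applies and $\Phi_t^w$ is well defined. Fix $x,y \in \R^d$ and write the two characteristics $X(\cdot) := \Phi_\cdot^w(x)$ and $Y(\cdot) := \Phi_\cdot^w(y)$ in integral form. Subtracting the two identities and using the Lipschitz-in-space bound on $w$ yields
\begin{equation*}
|X(t)-Y(t)| \leq |x-y| + L \INTSeg{|X(s)-Y(s)|}{s}{0}{t}.
\end{equation*}
Gronwall's integral inequality then gives $|X(t)-Y(t)| \leq e^{Lt}|x-y|$, that is $\Lip(\Phi_t^w) \leq e^{Lt}$ for every $t \in [0,T]$.

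It then remains to combine the two ingredients. Let $f$ be any function with $\Lip(f) \leq 1$. By the change-of-variables identity above,
\begin{equation*}
\INTDom{f}{\R^d}{((\Phi_t^w)_{\#}\mu - (\Phi_t^w)_{\#}\nu)} = \INTDom{(f \circ \Phi_t^w)}{\R^d}{(\mu-\nu)}.
\end{equation*}
The function $g := e^{-Lt} (f \circ \Phi_t^w)$ satisfies $\Lip(g) \leq e^{-Lt} \Lip(f)\, \Lip(\Phi_t^w) \leq 1$, so the right-hand side equals $e^{Lt} \INTDom{g}{\R^d}{(\mu-\nu)} \leq e^{Lt} W_1(\mu,\nu)$ by \eqref{e-W}. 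Taking the supremum over all $1$-Lipschitz $f$ yields \eqref{e-Gronwall}.

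The computations are essentially routine; the only genuine subtlety is that $t \mapsto |X(t)-Y(t)|$ need not be differentiable at times where the two characteristics cross, and that $w$ is merely measurable in time. Both issues are sidestepped by working throughout with the integral (Carath\'eodory) formulation of the dynamics and with the integral form of Gronwall's lemma, rather than differentiating $|X(t)-Y(t)|$ directly.
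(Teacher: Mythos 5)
Your proof is correct, but it is worth pointing out that the paper itself does not prove Proposition \ref{p-gronwall}: it simply defers to \cite{Pedestrian} and \cite{ContInc}, where the standard argument runs on the \emph{primal} side of the Kantorovich problem. There one takes an optimal transport plan $\pi$ between $\mu$ and $\nu$, pushes it forward by the map $(x,y) \mapsto (\Phi_t^w(x),\Phi_t^w(y))$, notes that the resulting plan is admissible between the two push-forward measures, and estimates $\int |\Phi_t^w(x)-\Phi_t^w(y)| \,\textnormal{d}\pi \leq e^{Lt}\int |x-y|\,\textnormal{d}\pi$. Your route stays entirely on the \emph{dual} side \eqref{e-W}: you compose the $1$-Lipschitz test function with the flow and renormalize by $e^{-Lt}$. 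Both arguments rest on the single analytic ingredient $\Lip(\Phi_t^w)\leq e^{Lt}$, which you establish carefully via the integral form of Gronwall's lemma --- rightly so, since $w$ is only measurable in time and $t \mapsto |X(t)-Y(t)|$ need not be differentiable. Your dual approach has the advantage of matching exactly the definition of $W_1$ adopted in the paper, so it requires neither the existence of optimal plans nor an appeal to Kantorovich--Rubinstein duality as a theorem; the coupling approach of the cited references, by contrast, generalizes verbatim to $W_p$ for $p > 1$, where no such convenient dual formulation is available (this is why those references can state the result more generally). One small point you use implicitly and could make explicit: since $w$ is uniformly bounded, $(\Phi_t^w)_{\#}\mu$ and $(\Phi_t^w)_{\#}\nu$ remain compactly supported, so they stay in $\Pcal_c(\R^d)$, all integrals of Lipschitz functions are finite, and \eqref{e-W} indeed applies to the pushed-forward measures.
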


\begin{proof}
See e.g. \cite{Pedestrian} or \cite{ContInc} for a more general statement. 
\end{proof}

\section{Solutions of $\Pn$} \label{s-Pn}

In this section, we explicitly compute the solutions of $\Pn$. We use bold notations $(\xb,\pb) \in \R^{2N}$ to denote vectors in $\R^N$, as well as $\ub \in [-1,1]^N$. We choose $N \geq 2$ to be even, which ensures that $x_i(0)\neq 0$. This condition is not crucial for our result, but simplifies the discussion.

Observe first that $\Pn$ is regular both with respect to the dynamics and control variables. Moreover, optimal controls do exist since the set of admissible controls $ [-1,1]^N$ is convex and compact (see e.g. \cite[Theorem 23.11]{Clarke}). Finally, the smoothness of the data allows us to compute optimal controls via the Pontryagin Maximum Principle, see e.g. \cite[Chapter 22]{Clarke}. 

Denoting by $\pb$ a costate variable associated to a state vector $\xb$, the Hamiltonian of problem $\Pn$ writes as
\begin{equation}
\label{eq:ExplicitExp_Hamiltonian}
\Hpazo_N(\xb,\pb,\ub) := \mathsmaller{\sum}\limits_{i=1}^N \big( p_i u_i - \tfrac{1}{2N} u_i^2 \big).
\end{equation}
Given an optimal trajectory-control pair $(\xb^*_N(\cdot),\ub^*_N(\cdot))$, the PMP provides the existence of a curve $\pb^*_N(\cdot)$ satisfying
\begin{equation}
\label{e-PMP}
\begin{cases}
\dot p_i^*(t) & = 0, \\
p_i^*(T) & = \tfrac{1}{\lambda}\partial_{x_i} \Var(\xb^*_N(T)) = \tfrac{1}{N \lambda}(x_i^*(T) - \bar{\xb}_N^*(T)),  \\ 
u_i^*(t) & \in \underset{v \in [-1,1]}{\textnormal{argmax}} \, [ \, p_i^*(t) v - \tfrac{1}{2N}v^2 \, ],
\end{cases}
\end{equation}
where $\bar{\xb}=\frac1N\sum_{i=1}^Nx_i$ is the mean value of the vector $\xb$. Thus, the adjoint vector $\pb_N^*(\cdot)$ is constant, and satisfies
\begin{equation*}
p_i^*(t) = \tfrac{1}{\lambda N}(x_i^*(T) - \bar{\xb}_N^*(T)),
\end{equation*}
for any $i \in \{1,\dots,N\}$ and all times $t \in [0,T]$. As a consequence of the maximization condition, one can express the components of the optimal control $\ub^*(\cdot)$ explicitly as
\begin{equation}
\label{e-ui1}
u_i^*(t) = \pi \Big( \tfrac{1}{\lambda} (x_i^*(T) - \bar{\xb}_N^*(T)) \Big). 
\end{equation}
Here, we denoted by $u \in \R \mapsto \pi(u) \in [-1,1]$ the projection onto the set of admissible controls, namely 
\begin{equation*}
\pi(u):=\begin{cases}
1 &\mbox{~~if~~} u \geq 1, \\
u &\mbox{~~if~~} u \in [-1,1], \\
-1 &\mbox{~~if~~} u \leq -1.
\end{cases}
\end{equation*}
Our goal now is to analytically solve $\Pn$. First remark that the optimal control $\ub^*_N(\cdot) \equiv \ub^*_N$ does not depend on time, and let us denote by $\bar\ub^*_N$ its mean value. The mean value of the state then satisfies $\dot{\bar{\xb}}^*_N(t)=\bar\ub^*_N$, which implies
\begin{equation*}
\bar{\xb}_N^*(T) = T \bar{\ub}_N^* \qquad \text{since} \qquad \bar{\xb}_N(0) =0. 
\end{equation*}
Then, one can show that $\bar{\ub}^*_N = 0$, e.g. by observing that choosing $\tilde{u}_i := (u_i^* - \bar{\ub}^*_N)$ instead of $u_i^*$ would have no impact on the final cost while reducing the control cost. Consequently $\bar{\xb}^*(t)=\bar{\xb}(0)=0$ for all  $t \in [0,T]$, and since $x_i^*(T)=x_i(0)+Tu_i^*$ equation \eqref{e-ui1} now reads as
\begin{equation}
\label{e-ui}
u_i^* = \pi \Big( \tfrac{1}{\lambda}(x_i(0)+Tu_i^*) \Big). 
\end{equation} 
Now, we need to study the problem in several situations depending on the value of the penalization parameter $\lambda$.

\subsection{The case $\lambda>T$}

Consider the right-hand side of \eqref{e-ui} as a function of $u_i$, and observe that it is increasing, with maximal slope $T/\lambda<1$. Since the left-hand side is the identity, then \eqref{e-ui} always admits a unique solution (see Figure \ref{fig1} below).

\begin{figure}[htb]
\begin{center}
\includegraphics[width=8.4cm]{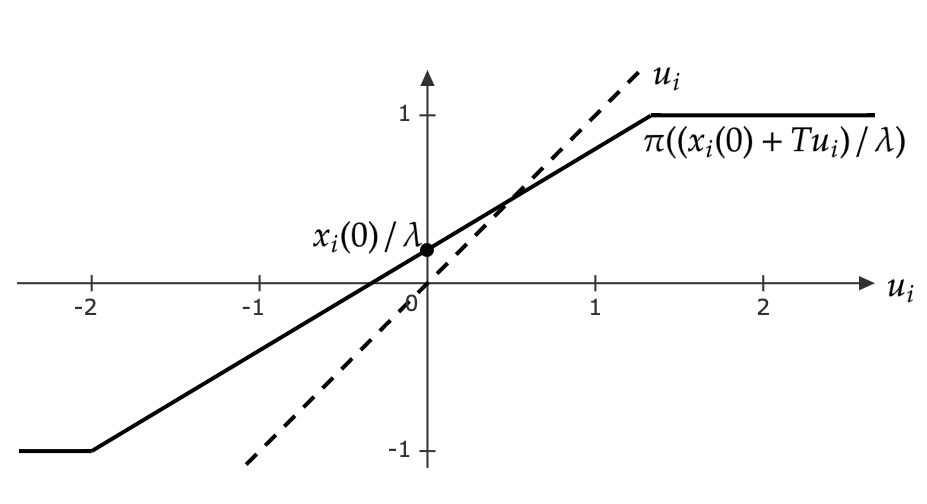}    
\caption{{ \small Solution of \eqref{e-ui} for $\lambda>T$.}} 
\label{fig1}
\end{center}
\end{figure}

If $|x_i(0)+Tu_i^*| \leq \lambda$, then the projection coincides with the identity operator. In this case, equation \eqref{e-ui} reads as $\lambda u_i^* = x_i(0) + T u_i^*$. We can then write $u_i^* = \frac{x_i(0)}{\lambda-T}$ whenever such formula yields a control $u_i \in [-1,1]$, i.e. for $x_i(0)\in [-(\lambda-T),\lambda-T]$. For other indices, one can easily check that the unique solution for $x_i(0)>\lambda-T$ is $u_i^* = 1$. Similarly for $x_i(0)<-(\lambda-T)$, the unique solution to \eqref{e-ui} is  $u_i^* = -1$. This gives the following optimal controls
\begin{equation}\label{e-c1}
u_i^* = \begin{cases}
1& \mbox{~~if~~} x_i(0)>\lambda-T, \\
\frac{x_i(0)}{\lambda-T} & \mbox{~~if~~} x_i(0)\in [-(\lambda-T),\lambda-T], \\
-1& \mbox{~~if~~} x_i(0)<-(\lambda-T).
\end{cases}
\end{equation}

\subsection{The case $\lambda=T$}

In this case, equation \eqref{e-ui} has no solution whenever the projection coincides with the identity, as $x_i(0)\neq 0$ for every $i \in \{1,\dots,N\}$. A simple computation shows that the only solution is the following
\begin{equation}\label{e-c2}
u_i^* = \begin{cases}
1& \mbox{~~if~~} x_i(0)>0, \\
-1& \mbox{~~if~~} x_i(0)<0. 
\end{cases}
\end{equation}

\subsection{The case $\lambda\in(0,T)$}

In this case, equation \eqref{e-ui} does not have a uniquely defined solution when the projection operator coincides with the identity. Indeed for $x_i(0) \in [-(T-\lambda),T-\lambda]$, both $\frac{x_i(0)}{\lambda-T}$ and $\sign(x_i(0))$ are solutions. One can prove that this second choice provides the minimizer of the cost, and that the corresponding optimal control is given by \eqref{e-c2}.

\subsection{The case $\lambda<0$}

In this scenario, one aims at minimizing both $\Var(\xb(T))$ and the running control cost. One can show that the Pontryagin Maximum Principle still reads as \eqref{e-PMP}, and thus \eqref{e-ui} holds too. Therein, the right-hand side is decreasing and bounded, and \eqref{e-ui} always admits a unique solution. Direct computations, similar to the previous ones, allow to prove that the optimal controls are given by 
\begin{equation}
\label{e-c4}
u_i^* = \begin{cases}
-1& \mbox{~~if~~} x_i(0)>-(\lambda-T),\\
\frac{x_i(0)}{\lambda-T} & \mbox{~~if~~} x_i(0)\in [(\lambda-T),-(\lambda-T)],\\
1& \mbox{~~if~~} x_i(0)<\lambda-T.
\end{cases}
\end{equation}

\subsection{Comparison}

We now highlight two important features of the optimal controls written above. First, for each pair of parameters $(\lambda,T)$, the value of $u_i^*$ only depends on $x_i(0)$ and not on the actual number $N \geq 1$ of agents. This will play a crucial role in the following Section \ref{s-proof}.

Second, we aim to evaluate the following quantity, that can be seen as the Lipschitz constant of the optimal control
\begin{equation*}
L(t):=\max_{i\neq j}\frac{|u_i^* - u_j^*|}{|x_i^*(t)-x_j^*(t)|}. 
\end{equation*}
By recalling that $u_i^*,u_j^*$ are constant in time and that $x_i^*(t) = x_i(0)+t u_i^*$, we can isolate the following scenarios.

\begin{itemize}
\item $\lambda>T$ : Here $|x_i^*(t)-x_j^*(t)|$ is increasing in time, thus
\begin{equation*}
L(t)\leq L(0)\leq \frac{x_i(0)-x_j(0)}{(\lambda-T)(x_i(0)-x_j(0))}=\frac{1}{\lambda-T}.
\end{equation*}
Remark that in this case, the Lipschitz constant is uniformly bounded with respect to $N \geq 1$.
\item $\lambda\in (0,T]$ : In this case, one can easily see that the maximum is reached at $i=\frac{N}2$ and $j=i+1$, i.e  for the maximal negative initial position and the minimal positive one. It then holds 
\begin{equation*}
\begin{aligned}
L(t) & = \frac{|1-(-1)|}{|(x_i(0)-t)-(x_j(0)+t)|} =\frac{N-1}{1+t(N-1)}.
\end{aligned}
\end{equation*}
Contrary to the previous case, this constant depends explicitly on $N$ and can be arbitrarily large as $N\to +\infty$ when $t \in [0,T]$ is small.
\item $\lambda<0$ : The result is similar to the case $\lambda>T$. Here, the maximal value is attained for $t=T$, hence
\begin{equation*}
\begin{aligned}
& L(t) \leq L(T) =\\
&\frac{| x_i(0)-x_j(0)|}{\big|(\lambda-T)\big( x_i(0)-x_j(0) + T \big(\frac{x_i(0)-x_j(0)}{\lambda-T} \big) \big) \big|} =\frac{1}{|\lambda|}.
\end{aligned}
\end{equation*}
Also in this case, the Lipschitz constant is independent of $N \geq 1$ and uniformly bounded.
\end{itemize}

\section{Solution of $\Pin$}
\label{s-proof}

In this section, we prove our main result Theorem \ref{t-main}. The interest of the proof is two-fold. First, it shows the fundamental role played by the parameters $(\lambda,T)$ in the existence of regular solutions to $\Pin$. Second, it leverages quite simply and directly the explicit form of the solution to the discretized problems $\Pn$ derived above.

The idea of the proof is the following. When $\lambda>T$ or $\lambda<0$, we have seen that there exists a Lipschitz minimizer for $\Pn$, and we can use it to build a regular minimizer for $\Pin$. On the contrary when $\lambda\in(0,T]$, the minimizers of $\Pn$ are not regular and we contradict the optimality of any candidate Lipschitz minimizer for $\Pin$ by studying the mean-field limit of the discretized problems.

With this goal in mind, we explicitly build controlled vector fields that will be optimal for $\Pn$ and possibly for $\Pin$. We will define a candidate optimal vector field $u^* : [0,T] \times \R^d \rightarrow \R^d$ by requiring that
\begin{equation} 
\label{e-feedback}
u^*(t,x_i^*(t))=u_i^*(t),
\end{equation}
for every $i \in \{1,\dots,N \}$ and almost every $t \in [0,T]$. 

Therefore, we need to consider the following three cases.
\begin{itemize}
\item[$\bullet$] $\lambda>T$ : The optimal controls $u_i^*(\cdot)$ satisfy \eqref{e-c1}, and we first consider the case $x_i(0)>\lambda -T$. By setting 
\begin{equation*}
u^*(t,y) = 1, ~~ y\geq \lambda-T+t, 
\end{equation*}
one then has $u^*(t,x_i(0)+t)=1$, so that \eqref{e-feedback} holds. The case $x_i(0)<-(\lambda-T)$ is completely similar. If $x_i(0)\in [-(\lambda-T),\lambda-T]$, then \eqref{e-feedback} reads as 
\begin{equation*} 
u^* \left(t , x_i(0)+t\frac{x_i(0)}{\lambda-T} \right) = \frac{x_i(0)}{\lambda-T}.
\end{equation*}
This condition is verified by choosing the velocity field
\begin{equation*}
u^*(t,y) := \frac{y}{\lambda-T+t}, ~~ \text{ $y \in[-(\lambda-T+t),\lambda-T+t]$}.
\end{equation*}
These three cases can be merged into the formula
\begin{equation}
\label{e-f1}
u^*(t,y) = \pi\left(\frac{y}{\lambda-T+t}\right).
\end{equation}
\item[$\bullet$]$\lambda\in (0,T]$ : We now consider from \eqref{e-c2}, and set 
\begin{equation}
\label{e-f2}
u^*(t,y)=\sign(y), ~~ y\in(-\infty,-t)\cup(t,+\infty).
\end{equation}
We do not define $u^*(t,y)$ for $y\in[-t,t]$. 
\item[$\bullet$] $\lambda<0$ : Starting from \eqref{e-c4}, we easily show that
\begin{equation*}
u^*(t,y) = \pi\left(\frac{y}{\lambda-T+t}\right), 
\end{equation*}
satisfies \eqref{e-feedback}.
\end{itemize}

\begin{figure}[htb]
\hspace{-0.7cm}
\begin{tikzpicture}
\node(image) at (0,0)
     {\includegraphics[width = 0.56\textwidth]{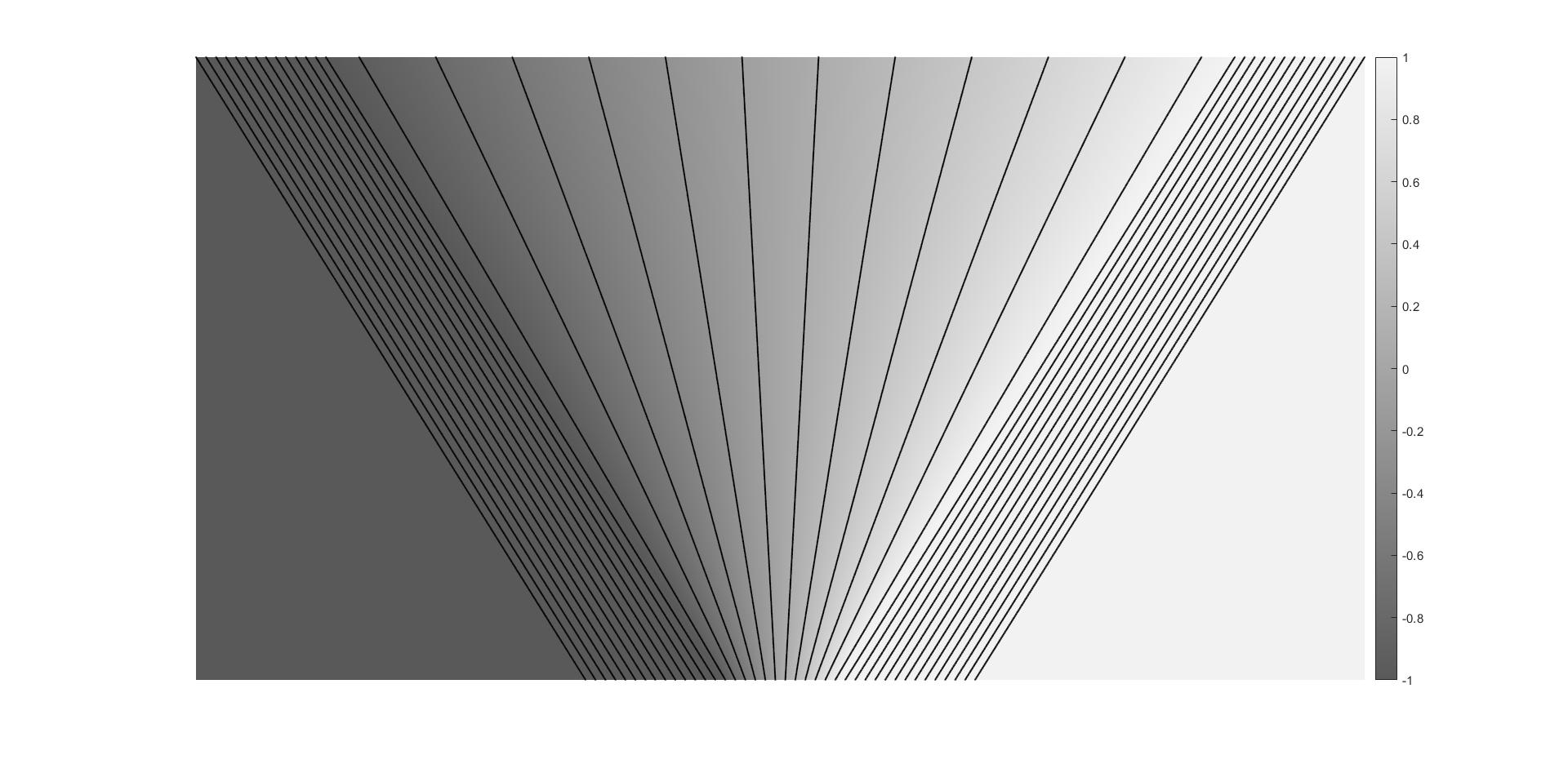}};
\draw[black,->](-4,-1.95) -- (4.095,-1.95);
\draw[black,->](-3.825,-2.2) -- (-3.825,2.25); 
\draw (-3.275,-2.35) node {\scriptsize $y=-(1+T)$};
\draw (3.5,-2.35) node {\scriptsize $y=1+T$};
\draw (-4.15,-1.85) node {\scriptsize \rotatebox{90}{$t = 0$}};
\draw (-4.15,2.15) node {\scriptsize \rotatebox{90}{$t = T$}};
\draw (-1.35,-2.2) node {\scriptsize $-1 = x_1^0$};
\draw (-0.05,-2.2) node {\scriptsize $\ldots  \ldots \ldots$};
\draw (1.2,-2.2) node {\scriptsize $x_N^0 = 1$};
%
\begin{scope}[shift = {(0,-5.5)}]
\node(image) at (0,0)
     {\includegraphics[width = 0.56\textwidth]{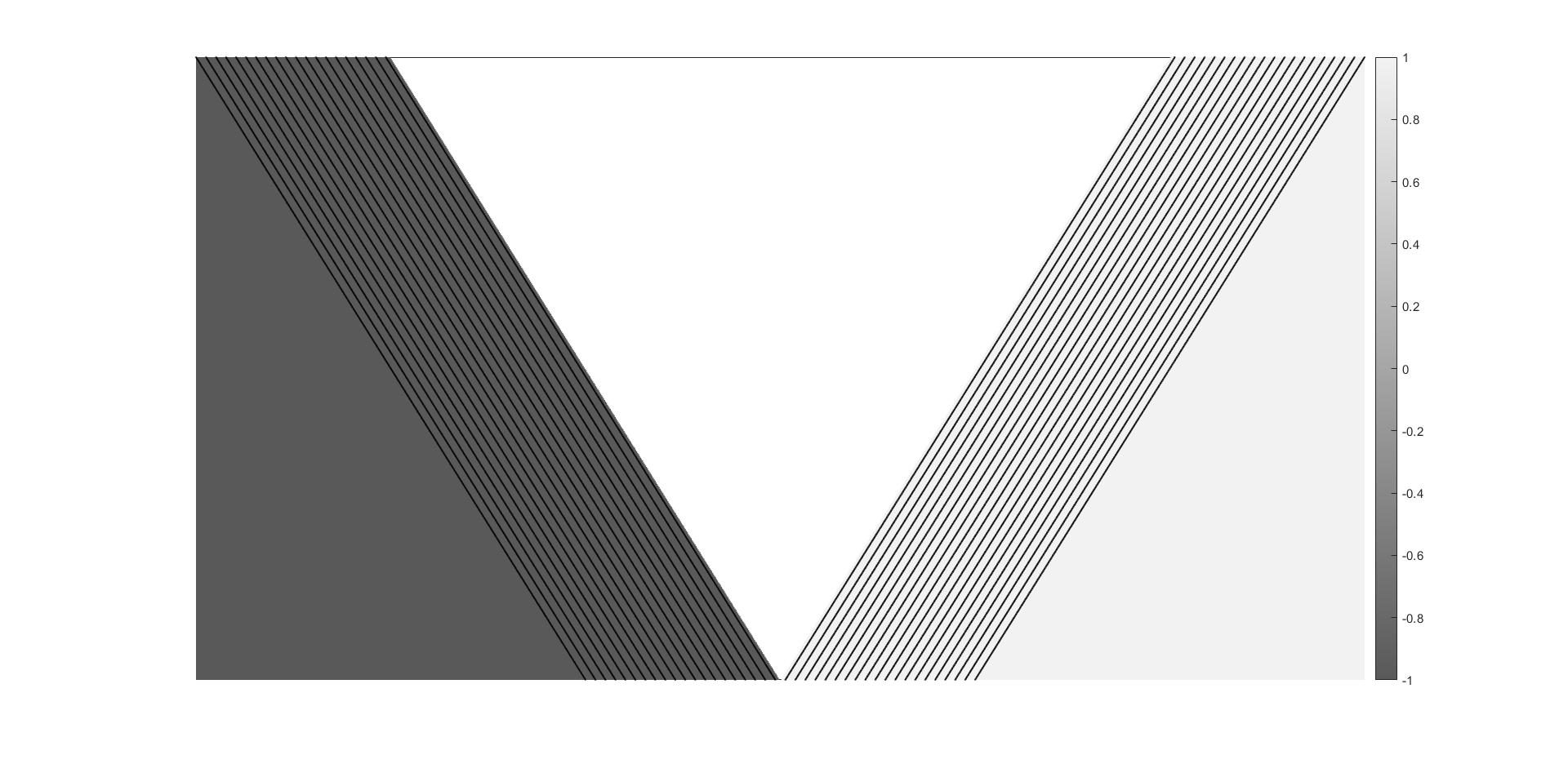}};
\draw[black,->](-4,-1.95) -- (4.095,-1.95);
\draw[black,->](-3.825,-2.2) -- (-3.825,2.25); 
\draw (-3.275,-2.35) node {\scriptsize $y=-(1+T)$};
\draw (3.5,-2.35) node {\scriptsize $y=1+T$};
\draw (-4.15,-1.85) node {\scriptsize \rotatebox{90}{$t = 0$}};
\draw (-4.15,2.15) node {\scriptsize \rotatebox{90}{$t = T$}};
\draw (-1.35,-2.2) node {\scriptsize $-1 = x_1^0$};
\draw (-0.05,-2.2) node {\scriptsize $\ldots  \ldots \ldots$};
\draw (1.2,-2.2) node {\scriptsize $x_N^0 = 1$};
\end{scope}
%
\begin{scope}[shift = {(0,-11)}]
\node(image) at (0,0)
     {\includegraphics[width = 0.56\textwidth]{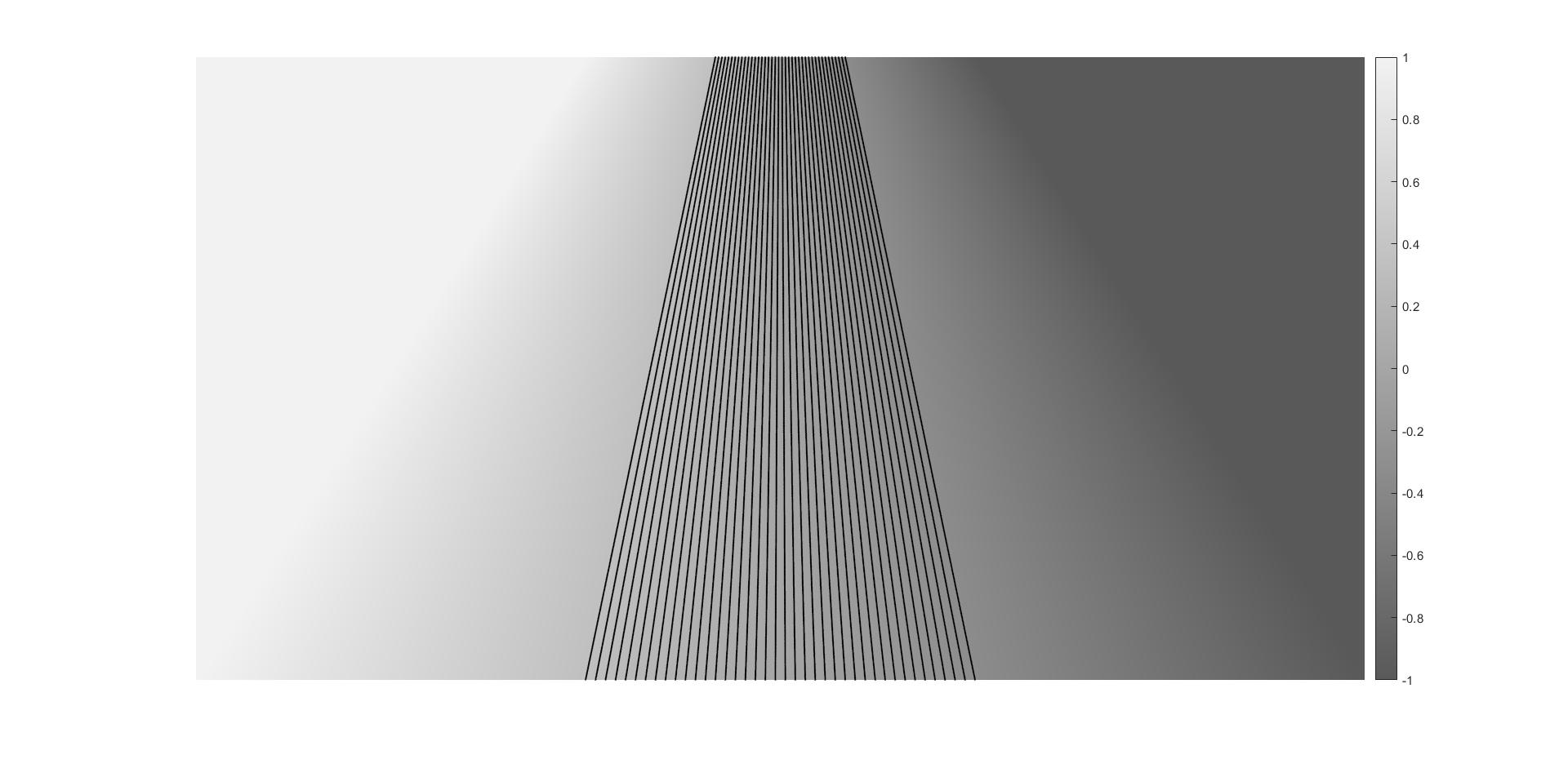}};
\draw[black,->](-4,-1.95) -- (4.095,-1.95);
\draw[black,->](-3.825,-2.2) -- (-3.825,2.25); 
\draw (-3.275,-2.35) node {\scriptsize $y=-(1+T)$};
\draw (3.5,-2.35) node {\scriptsize $y=1+T$};
\draw (-4.15,-1.85) node {\scriptsize \rotatebox{90}{$t = 0$}};
\draw (-4.15,2.15) node {\scriptsize \rotatebox{90}{$t = T$}};
\draw (-1.35,-2.2) node {\scriptsize $-1 = x_1^0$};
\draw (-0.05,-2.2) node {\scriptsize $\ldots  \ldots \ldots$};
\draw (1.2,-2.2) node {\scriptsize $x_N^0 = 1$};
\end{scope}
\end{tikzpicture}
\caption{{\small Trajectories of $N= 40$ agents (foreground) with the magnitude of $u^*(t,y)$ (background) in the case where $\lambda > T$ (top), $\lambda \in (0,T]$ (middle) and $\lambda < 0$ (bottom).}}
\label{fig2}
\end{figure}

The optimal agent trajectories $\xb^*_N(\cdot)$ along with the velocity field $u^*(\cdot,\cdot)$ are illustrated for each scenario in Figure \ref{fig2}. In the sequel, we will need the following useful lemma.

\begin{lem} 
\label{l-cost} 
Let $u : [0,T] \times \R \rightarrow \R$ be measurable in time, Lipschitz in space and bounded. For each initial data $\xb^0_N$ of $\Pn$, define the control $\ub_N(\cdot)$ via $u_i(t):=u(t,x_i(t))$. If $\mu^0 \in \Pcal_c(\R^d)$ and the empirical measures $\mu_N^0:=\frac1N\sum_{i=1}^N \delta_{x_i^0}$ have uniformly compact support, then 
\begin{equation*}
\mu_N^0 \underset{N \rightarrow +\infty}{\weak} \mu^0 \quad \text{implies} \quad \Cpazo_N(\xb^0_N,\ub_N) \underset{N \rightarrow +\infty}{\longrightarrow} \Cpazo_\infty(\mu^0,u).
\end{equation*} 
\end{lem}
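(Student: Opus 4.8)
The plan is to reduce the whole statement to the stability of the continuity equation \eqref{eq:TransportPDE} by recognizing the empirical measure $\mu_N(t):=\tfrac1N\sum_{i=1}^N\delta_{x_i(t)}$ as its solution. Since $u_i(t)=u(t,x_i(t))$, each agent obeys $\dot x_i(t)=u(t,x_i(t))$, so the trajectories are exactly the characteristics of the field $u$; as $u$ is bounded, measurable in time and Lipschitz in space, the method of characteristics gives $\mu_N(t)=(\Phi_t^{u})_{\#}\mu_N^0$, i.e.\ $\mu_N(\cdot)$ solves \eqref{e-PDE} with datum $\mu_N^0$. This lets me rewrite both terms of $\Cpazo_N$ intrinsically: using $u_i(t)=u(t,x_i(t))$ one has $\tfrac{1}{2N}\sum_{i=1}^N\INTSeg{u_i^2(t)}{t}{0}{T}=\tfrac12\INTSeg{\INTDom{u^2(t,x)}{\R}{\mu_N(t)(x)}}{t}{0}{T}$, while the variance identity recalled in Section~\ref{s-PDE} gives $\Var(\xb(T))=\Var(\mu_N(T))$. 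Hence $\Cpazo_N(\xb^0_N,\ub_N)=\Cpazo_\infty(\mu_N^0,u)$ exactly, and the statement reduces to the continuity of $\nu\mapsto\Cpazo_\infty(\nu,u)$ along the sequence $\mu_N^0\weak\mu^0$.

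Next I would upgrade the weak convergence of the initial data to a uniform-in-time Wasserstein convergence of the flows. Since the $\mu_N^0$ and $\mu^0$ are supported in a common compact set, the weak convergence $\mu_N^0\weak\mu^0$ is equivalent to $W_1(\mu_N^0,\mu^0)\to0$. Applying Proposition~\ref{p-gronwall} to the two solutions $\mu_N(t)=(\Phi_t^u)_{\#}\mu_N^0$ and $\mu(t)=(\Phi_t^u)_{\#}\mu^0$ of the same equation, with $L$ the uniform-in-time spatial Lipschitz constant of $u$, yields
\begin{equation*}
\sup_{t\in[0,T]}W_1(\mu_N(t),\mu(t))\leq e^{LT}\,W_1(\mu_N^0,\mu^0)\underset{N\to+\infty}{\longrightarrow}0 .
\end{equation*}
Moreover, since $\|u\|_\infty<+\infty$, all trajectories remain on $[0,T]$ in a fixed compact set $K\subset\R$, so every $\mu_N(t)$ and $\mu(t)$ is supported in $K$.

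It then remains to pass to the limit in the two terms. For the control term I would note that $u^2(t,\cdot)$ is globally Lipschitz, uniformly in $t$, since $|u^2(t,x)-u^2(t,y)|\leq 2\|u\|_\infty L\,|x-y|$; the dual formulation \eqref{e-W} of $W_1$ then gives, for every $t$,
\begin{equation*}
\Big|\INTDom{u^2(t,x)}{\R}{\mu_N(t)(x)}-\INTDom{u^2(t,x)}{\R}{\mu(t)(x)}\Big|\leq 2\|u\|_\infty L\,W_1(\mu_N(t),\mu(t)),
\end{equation*}
and integrating this uniform bound over $[0,T]$ shows the control terms converge. For the variance term I would replace $x\mapsto x^2$ and $x\mapsto x$ by globally Lipschitz functions coinciding with them on $K$ (which leaves the integrals unchanged, all measures being supported in $K$), apply the same $W_1$ estimate to get $\INTDom{x^2}{\R}{\mu_N(T)(x)}\to\INTDom{x^2}{\R}{\mu(T)(x)}$ and $\INTDom{x}{\R}{\mu_N(T)(x)}\to\INTDom{x}{\R}{\mu(T)(x)}$, and conclude $\Var(\mu_N(T))\to\Var(\mu(T))$ by continuity of the square. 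Combining the two limits gives the claim.

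The main obstacle is that $u$ is only measurable, not continuous, in time, so one cannot argue by continuity of $t\mapsto\INTDom{u^2(t,x)}{\R}{\mu_N(t)(x)}$; the remedy is precisely the uniform-in-$t$ Wasserstein estimate above, which bounds the integrand difference by $2\|u\|_\infty L\,e^{LT}W_1(\mu_N^0,\mu^0)$ independently of $t$, making the time integration harmless. The remaining points to handle with care are the passage from weak to $W_1$ convergence (which uses the uniform compact support hypothesis) and the failure of $x\mapsto x^2$ to be globally Lipschitz, forcing the truncation of the test functions on $K$.
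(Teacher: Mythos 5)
Your proposal is correct and follows essentially the same route as the paper's own proof: you identify $\mu_N(\cdot)$ with the solution of \eqref{e-PDE} issued from $\mu_N^0$ so that $\Cpazo_N(\xb_N^0,\ub_N)=\Cpazo_\infty(\mu_N^0,u)$, then combine Proposition \ref{p-gronwall} with the uniform compact supports, Lipschitz truncations of $x\mapsto x^2$ and $x\mapsto x$, and the Lipschitz bound on $u^2(t,\cdot)$ to estimate $|\Cpazo_\infty(\mu_N^0,u)-\Cpazo_\infty(\mu^0,u)|$ in terms of $W_1(\mu_N^0,\mu^0)$. You even make explicit two details the paper leaves implicit, namely that uniform compact support upgrades weak convergence of $\mu_N^0$ to $W_1$-convergence, and the explicit constant $2\|u\|_\infty L$ for the spatial Lipschitz bound of $u^2(t,\cdot)$.
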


\begin{proof} 
Consider the trajectory $\mu_N(t):=\frac1N\sum_{i=1}^N \delta_{x_i(t)}$, where $\xb_N(\cdot)$ is the unique solution of \eqref{e-ODE}. By definition of the controls $\ub_N(\cdot)$, we know that $\mu_N(\cdot)$ solves 
\begin{equation*}
\left\{
\begin{aligned}
& \partial_t \mu(t) + \nabla \cdot (u(t,\cdot) \mu(t)) = 0, \\
& \mu(0) = \mu^0_N, 
\end{aligned}
\right.
\end{equation*}
and the regularity of $u(t,\cdot)$ ensures that such a solution is unique. By a direct computation, one can check that 
\begin{equation}
\label{e-cost-ug}
\Cpazo_N(\xb^0_N,\ub_N) = \Cpazo_\infty(\mu_N^0,u).
\end{equation}
Denote now by $\mu(\cdot)$ the unique solution of \eqref{e-PDE} with control $(t,x) \mapsto u(t,x)$ and initial datum $\mu_0$. By the Lipschitz regularity of $u(t,\cdot)$ and Proposition \ref{p-gronwall}, the distance estimate
\begin{equation}\label{e-g2}
W_1(\mu_N(t),\mu (t)) \leqslant e^{LT} W_1(\mu_N(0),\mu(0)), 
\end{equation}
holds for all times $t \in [0,T]$. Since the measures $(\mu_N(0))$ have uniformly bounded support and $u(\cdot,\cdot)$ is bounded, then $(\mu_N(t))$ and $\mu(t)$ have uniformly bounded support. Recall that $\Var(\mu)=\INTDom{x^2}{\R}{\mu(x)} - (\INTDom{x}{\R}{\mu(x)})^2$, and observe that the function $x \mapsto x^2$ can be replaced by a map $\phi(\cdot)$ with Lipschitz constant $L_2$, that coincides with the latter on the supports of $(\mu_N(t)),\mu(t)$. Notice now that the function $x \in \R \mapsto \phi(x)/L_2$ is 1-Lipschitz, hence
\begin{equation*}
\mbox{$\left| \INTDom{x^2}{\R}{(\mu_N(T)-\mu(T))(x)} \right| \leq L_2 W_1(\mu_N(T),\mu(T)).$}
\end{equation*}
We can also estimate $(\int_{\R}  x\, \textnormal{d}\mu(x))^2$ with similar computation, since the integrand is $1$-Lipschitz. Because $u(t,\cdot)$ is Lipschitz and bounded, the map $u^2(t,\cdot)$ is Lipschitz with some constant $L_u$. Merging these facts, one can check that 
\begin{equation*}
\begin{aligned}
\Big| \Cpazo_{\infty} (& \mu_N^0,u) - \Cpazo_\infty(\mu^0,u) \Big| \leq \INTSeg{W_1(\mu_N(t),\mu(t))}{t}{0}{T} \\
& + \frac{1}{|\lambda|} \Big( L_2 W_1(\mu_N(T),\mu(T)) + C W_1(\mu_N(T),\mu(T)) \Big),
\end{aligned}
\end{equation*}
where $C > 0$ is a constant uniform in $N \geq 1$. The proof of our claim follows by combining \eqref{e-cost-ug} and \eqref{e-g2}. 
\end{proof}


{\it Proof of Theorem \ref{t-main}.} We study the three cases separately.

We first consider the situation in which $\lambda>T$, and prove that the Lipschitz-in-space control $u^*(\cdot,\cdot)$ given by \eqref{e-f1} is optimal for $\Pin$. By contradiction, assume that there exists another Lipschitz control $\tilde{u}(\cdot,\cdot)$ such that $\Cpazo_\infty(\mu^0,\tilde{u}) < \Cpazo_\infty(\mu^0,u^*)$, and define $\tilde{\ub}_N(\cdot)$ as $\tilde{u}_i(t) := \tilde{u}(t,\tilde{x}_i(t))$. Then by Lemma \ref{l-cost}, there exists $N \geq 1$ large enough such that 
\begin{equation*}
\Cpazo_N(\xb^0_N,\tilde{\ub}_N) = \Cpazo_{\infty}(\mu^0_N,\tilde u) < \Cpazo_{\infty}(\mu^0_N,u^*) = \Cpazo_N(\xb^0_N,\ub^*_N). 
\end{equation*}
which contradicts the fact that $u^*(\cdot,\cdot)$ is optimal for $\Pn$, as proven in Section \ref{s-Pn}. The case $\lambda<0$ is analogous.

We now sketch the proof for the case $\lambda\in(0,T]$. By contradiction, assume that $\tilde{u}(\cdot,\cdot)$ is an optimal control for $\Pin$ such that $\tilde{u}(t,\cdot)$ is Lipschitz with constant $L > 0$. By Lemma \ref{l-cost}, for each $\eps>0$ there exists $N_{\eps} \geq 1$ such that 
\begin{equation}
\label{e-eps}
|\Cpazo_N(\xb(0),\tilde{\ub}_N) - \Cpazo_{\infty}(\mu_0,\tilde{u})|<\eps,
\end{equation}
where $\tilde{\ub}_N(\cdot)$ is defined as before by $\tilde{u}_i(t):=\tilde{u}(t,x_i(t))$ for almost every $t \in [0,T]$. Our goal now is to estimate
\begin{equation*}
c_N := \Cpazo_N(\xb^0_N,\tilde{\ub}_N) - \Cpazo_N(\xb^0_N,\ub_N^*),
\end{equation*} 
where $\ub^*_N$ is given by \eqref{e-c2}. Since $\ub^*_N(\cdot)$ is optimal for $\Pn$, it necessarily holds that $c_N\geq 0$. Then, we have two cases.
\begin{itemize}
\item If $c_N \geq d$ for some $d>0$ and all $N \geq 1$, there exists a Lipschitz approximation $\hat u(t,\cdot)$ of \eqref{e-f2} such that
\begin{equation*}
\Cpazo_N(\xb_N^0,\tilde{\ub}_N) - \Cpazo_N(\xb^0_N,\hat{\ub}_N) > d/2, 
\end{equation*}
independently of $N \geq 1$. By letting $N\to\infty$ in \eqref{e-eps}, and thus $\eps\to 0$, it then holds
\begin{equation*}
\Cpazo_{\infty}(\mu_0,\hat{u})\leq \Cpazo_{\infty}(\mu_0,\tilde{u})-d/2, 
\end{equation*}
which contradicts the optimality of $\tilde u(\cdot,\cdot)$.
\item If $(c_N)$ is not bounded from below by a positive constant, there exists a subsequence (that we do not relabel) such that $c_N\to 0$, i.e. the costs $\Cpazo_N(\xb^0_N,\tilde{\ub}_N)$ get arbitrarily close to the optimal value $\Cpazo_N(\xb_0^N,\ub_N^*)$. However, a direct computation of solutions of \eqref{e-ODE} with any $L$-Lipschitz control shows that the cost does not converge, which leads to a contradiction.
\end{itemize}

\section{Conclusions} \label{s-conclusion}

In this article, we showed that the problem of either maximizing or minimizing the variance for multi-agent optimal control problems and for their mean-field approximation as $N\to +\infty$ exhibit very different behaviors, depending on the relative weight between the quadratic control penalization and the variance. When minimizing the variance functional, or maximizing it with a sufficiently small time horizon, the optimal controls for $\Pn$ allow to build an optimal control for $\Pin$ that is Lipschitz-in-space. On the contrary, when the time horizon is large in the variance maximization problem, the discrete optimal controls cannot be extended into a regular vector field, which allows to prove that there exist no Lipschitz solutions to $\Pin$. 

This interplay between, on the one hand, the existence of Lipschitz solutions for mean-field optimal control problems, and on the other hand the construction of suitable Lipschitz feedbacks at the microscopic level has been investigated in greater generality in \cite{lipreg}.

{\small 
\bibliography{ControlWassersteinBib2}

\begin{thebibliography}{25}
\providecommand{\natexlab}[1]{#1}
\providecommand{\url}[1]{\texttt{#1}}
\providecommand{\urlprefix}{URL }
\expandafter\ifx\csname urlstyle\endcsname\relax
  \providecommand{\doi}[1]{doi:\discretionary{}{}{}#1}\else
  \providecommand{\doi}{doi:\discretionary{}{}{}\begingroup
  \urlstyle{rm}\Url}\fi

\bibitem[{Ambrosio et~al.(2008)Ambrosio, Gigli, and Savar{\'e}}]{AGS}
Ambrosio, L., Gigli, N., and Savar{\'e}, G. (2008).
\newblock \emph{{G}radient {F}lows in {M}etric {S}paces and in the {S}pace of
  {P}robability {M}easures}.
\newblock Lectures in Mathematics ETH Z\"urich. Birkh\"auser Verlag.

\bibitem[{Bongini et~al.(2017)Bongini, Fornasier, Rossi, and
  Solombrino}]{MFPMP}
Bongini, M., Fornasier, M., Rossi, F., and Solombrino, F. (2017).
\newblock Mean-field {P}ontryagin maximum principle.
\newblock \emph{J. Optim. Theory Appl.}, 175(1), 1--38.

\bibitem[{Bonnet(2019)}]{PMPWassConst}
Bonnet, B. (2019).
\newblock {A Pontryagin Maximum Principle in Wasserstein Spaces for Constrained
  Optimal Control Problems}.
\newblock \emph{ESAIM COCV}, 25(52).

\bibitem[{Bonnet and Frankowska(2021{\natexlab{a}})}]{SetValuedPMP}
Bonnet, B. and Frankowska, H. (2021{\natexlab{a}}).
\newblock {Necessary Optimality Conditions for Optimal Control Problems in
  Wasserstein Spaces}.
\newblock \emph{Applied Mathematics and Optimization}, published online.

\bibitem[{Bonnet and Frankowska(2021{\natexlab{b}})}]{ContInc}
Bonnet, B. and Frankowska, H. (2021{\natexlab{b}}).
\newblock Differential inclusions in wasserstein spaces: The cauchy-lipschitz
  framework.
\newblock \emph{Journal of Differential Equations}, 271, 594--637.

\bibitem[{Bonnet and Rossi(2019)}]{PMPWass}
Bonnet, B. and Rossi, F. (2019).
\newblock The {P}ontryagin maximum principle in the {W}asserstein space.
\newblock \emph{Calc. Var. PDEs}, 58:11.

\bibitem[{Bonnet and Rossi(2021)}]{lipreg}
Bonnet, B. and Rossi, F. (2021).
\newblock Intrinsic lipschitz regularity of mean-field optimal controls.
\newblock \emph{SIAM J. Control Optim.}, 59(3), 2011–2046.

\bibitem[{Bullo et~al.(2009)Bullo, Cort{\'e}s, and Martines}]{Bullo2009}
Bullo, F., Cort{\'e}s, J., and Martines, S. (2009).
\newblock \emph{{Distributed Control of Robotic Networks}}.
\newblock App. Math. Princeton University Press.

\bibitem[{Camazine et~al.(2001)Camazine, Deneubourg, Franks, Sneyd, Theraulaz,
  and Bonabeau}]{Camazine2001}
Camazine, S., Deneubourg, J.L., Franks, N.R., Sneyd, J., Theraulaz, G., and
  Bonabeau, E. (2001).
\newblock \emph{{Self-Organization in Biological Systems}}.
\newblock Princeton University Press.

\bibitem[{Caponigro et~al.(2015)Caponigro, Fornasier, Piccoli, and
  Tr{\'e}lat}]{Caponigro2015}
Caponigro, M., Fornasier, M., Piccoli, B., and Tr{\'e}lat, E. (2015).
\newblock {Sparse Stabilization and Control of Alignment Models}.
\newblock \emph{Math. Mod. Meth. Appl. Sci.}, 25 (3), 521--564.

\bibitem[{Carrillo et~al.(2010)Carrillo, Fornasier, Rosado, and
  Toscani}]{Carrillo2010}
Carrillo, J., Fornasier, M., Rosado, J., and Toscani, G. (2010).
\newblock {Asymptotic Flocking for the Kinetic Cucker-Smale Model}.
\newblock \emph{SIAM Journal on Mathematical Analysis}, 42(1), 218--236.

\bibitem[{Clarke(2013)}]{Clarke}
Clarke, F. (2013).
\newblock \emph{{Functional Analysis, Calculus of Variations and Optimal
  Control}}.
\newblock Springer.

\bibitem[{Cristiani et~al.(2011)Cristiani, Piccoli, and Tosin}]{CPT}
Cristiani, E., Piccoli, B., and Tosin, A. (2011).
\newblock Multiscale modeling of granular flows with application to crowd
  dynamics.
\newblock \emph{Multiscale Modeling \& Simulation}, 9(1), 155--182.

\bibitem[{Cucker and Smale(2007)}]{CS1}
Cucker, F. and Smale, S. (2007).
\newblock {Emergent Behavior in Flocks}.
\newblock \emph{IEEE Trans. Automat. Control}, 52(5), 852--862.

\bibitem[{Duprez et~al.(2019)Duprez, Morancey, and Rossi}]{Duprez2019}
Duprez, M., Morancey, M., and Rossi, F. (2019).
\newblock {A}pproximate and {E}xact {C}ontrollability of the {C}ontinuity
  {E}quation with a {L}ocalized {V}ector {F}ield.
\newblock \emph{SIAM J. Control Optim.}, 57(2), 1284--1311.

\bibitem[{Duprez et~al.(2020)Duprez, Morancey, and Rossi}]{Duprez2020}
Duprez, M., Morancey, M., and Rossi, F. (2020).
\newblock {Minimal time problem for crowd models with a localized vector
  field}.
\newblock \emph{Journal of Differential Equations}, 269(1), 82--124.

\bibitem[{Elamvazhuthi and Berman(2019)}]{bermansurvey}
Elamvazhuthi, K. and Berman, S. (2019).
\newblock Mean-field models in swarm robotics: A survey.
\newblock \emph{Bioinspiration \& Biomimetics}, 15(1), 015001.

\bibitem[{Fornasier et~al.(2019)Fornasier, Lisini, Orrieri, and
  Savar{\'e}}]{FLOS}
Fornasier, M., Lisini, S., Orrieri, C., and Savar{\'e}, G. (2019).
\newblock {M}ean-{F}ield {O}ptimal {C}ontrol as {G}amma-{L}imit of {F}inite
  {A}gent {C}ontrols.
\newblock \emph{Europ. Journ. of App. Math.}, 1--34.

\bibitem[{Fornasier and Solombrino(2014)}]{MFOC}
Fornasier, M. and Solombrino, F. (2014).
\newblock {Mean Field Optimal Control}.
\newblock \emph{Esaim COCV}, 20(4), 1123--1152.

\bibitem[{Fornasier et~al.(2014)Fornasier, Piccoli, and Rossi}]{MFSOC}
Fornasier, M., Piccoli, B., and Rossi, F. (2014).
\newblock Mean-field sparse optimal control.
\newblock \emph{Philos. Trans. R. Soc. Lond. Ser. A Math. Phys. Eng. Sci.},
  372(2028), 20130400, 21.

\bibitem[{Golse(2016)}]{golse}
Golse, F. (2016).
\newblock \emph{On the Dynamics of Large Particle Systems in the Mean Field
  Limit}, 1--144.
\newblock Springer International Publishing.

\bibitem[{Ha and Liu(2009)}]{HaLiu}
Ha, S. and Liu, J. (2009).
\newblock {A Simple Proof of the Cucker-Smale Flocking Dynamics and Mean-Field
  Limit}.
\newblock \emph{Comm. Math. Sci.}, 7(2), 297--325.

\bibitem[{Leonard(2013)}]{leonard}
Leonard, N. (2013).
\newblock Multi-agent system dynamics: Bifurcation and behavior of animal
  groups.
\newblock \emph{Plenary paper IFAC Symposium on Nonlinear Control Systems,
  Toulouse, France.}

\bibitem[{Piccoli and Rossi(2013)}]{Pedestrian}
Piccoli, B. and Rossi, F. (2013).
\newblock {Transport Equation with Nonlocal Velocity in {W}asserstein Spaces :
  Convergence of Numerical Schemes}.
\newblock \emph{Acta App. Math.}, 124(1), 73--105.

\bibitem[{Piccoli et~al.(2015)Piccoli, Rossi, and Tr\'{e}lat}]{ControlKCS}
Piccoli, B., Rossi, F., and Tr\'{e}lat, E. (2015).
\newblock Control to flocking of the kinetic {C}ucker-{S}male model.
\newblock \emph{SIAM J. Math. Anal.}, 47(6), 4685--4719.

\end{thebibliography}


\begin{thebibliography}{4}
\providecommand{\natexlab}[1]{#1}
\providecommand{\url}[1]{\texttt{#1}}
\providecommand{\urlprefix}{URL }
\expandafter\ifx\csname urlstyle\endcsname\relax
  \providecommand{\doi}[1]{doi:\discretionary{}{}{}#1}\else
  \providecommand{\doi}{doi:\discretionary{}{}{}\begingroup
  \urlstyle{rm}\Url}\fi

\bibitem[{Able(1956)}]{Abl:56}
Able, B. (1956).
\newblock Nucleic acid content of microscope.
\newblock \emph{Nature}, 135, 7--9.

\bibitem[{Able et~al.(1954)Able, Tagg, and Rush}]{AbTaRu:54}
Able, B., Tagg, R., and Rush, M. (1954).
\newblock Enzyme-catalyzed cellular transanimations.
\newblock In A.~Round (ed.), \emph{Advances in Enzymology}, volume~2, 125--247.
  Academic Press, New York, 3rd edition.

\bibitem[{Keohane(1958)}]{Keo:58}
Keohane, R. (1958).
\newblock \emph{Power and Interdependence: World Politics in Transitions}.
\newblock Little, Brown \& Co., Boston.

\bibitem[{Powers(1985)}]{Pow:85}
Powers, T. (1985).
\newblock Is there a way out?
\newblock \emph{Harpers}, 35--47.

\end{thebibliography}
}
\end{document}